\documentclass[12pt,a4paper]{article}

\usepackage{amsmath,amsthm,amssymb,url,comment}

\newtheorem{theorem}{Theorem}
\newtheorem{lemma}[theorem]{Lemma}

\providecommand{\abs}[1]{\lvert#1\rvert}
\newenvironment{proofof}[1]{\noindent{\itshape
    #1. }}{\hfill$\qed$\medskip}
\newcommand{\dist}{\mbox{\rm dist\/}}
\newcommand{\ce}{{\cal E}}

\makeatletter
\makeatother

\begin{document}

\normalsize

\title{{\bf \Huge Metric spaces in which many triangles are degenerate}}


\author{Vašek Chvátal\thanks{Department of Computer Science and Software Engineering, Concordia University, Montreal (Emeritus) and 
 Department of Applied Mathematics, Charles University, Prague (Visiting). {\em E-mail:} {\urlstyle{tt} chvatal@cse.concordia.ca}. Supported by NSERC  grant RGPIN/5599-2014 and by H2020-MSCA-RISE project CoSP-GA No. 823748.} \hspace{5mm} Ida Kantor%
		\thanks{Computer Science Institute of Charles University, Prague. {\em E-mail:} {\urlstyle{tt} ida@iuuk.mff.cuni.cz}. Supported by GA\v{C}R grant 22-19073S and by Charles
University project UNCE/SCI/004.} }

\date{}





\maketitle



\begin{abstract}
Richmond and Richmond ({\sc American Mathematical Monthly} {\bf 104} (1997), 713--719) proved the following theorem: If, in a metric space with at least five points, all triangles are degenerate, then the space is isometric to a subset of the real line. We prove that the hypothesis is unnecessarily  strong: In a metric space on $n$ points, $\binom{n}{3}-n+5$ arbitrarily placed or $3\binom{n-2}{2}+1$ suitably placed degenerate triangles suffice.
\end{abstract}

\section{Results.}
Given a metric space $(V,\dist)$, we follow \cite{ACH} in writing $[rst]$ to signify that 
$r,s,t$ are pairwise distinct points of $V$ and $\dist(r,s)+\dist(s,t)=\dist(r,t)$. Following~\cite{RiRi97}, we refer to three-point subsets of $V$ as {\em triangles;\/} if $[rst]$, then the triangle $\{r,s,t\}$ is called {\em degenerate.\/} 

Now let $(V,\dist)$ be a metric space.  
Trivially, if there is a linear order $\preceq$ on $V$ such that $r\prec s\prec t \;\Rightarrow [rst]$, then all triangles in $V$ are degenerate. Richmond and Richmond~\cite{RiRi97} proved the converse under a mild lower bound on $\abs{V}$:
\begin{theorem}[\cite{RiRi97}]\label{riri}
Let $(V,\dist)$ be a metric space such that $\abs{V}\ge 5$. If all triangles in $V$ are degenerate, then there is a linear order $\preceq$ on $V$ such that $r\prec s\prec t \;\Rightarrow [rst]$.
\end{theorem}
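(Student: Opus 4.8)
The plan is to single out two ``extreme'' points of $V$ and then order the remaining points by their distance from one of the two.

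I will rely on three elementary facts. (i) In any metric space, $[uvw]$ and $[uwz]$ together imply $[uvz]$ and $[vwz]$; this is immediate from the triangle inequality. (ii) $[rst]$ is symmetric in $r$ and $t$, and for pairwise distinct $r,s,t$ at most one of $[rst]$, $[rts]$, $[srt]$ holds, since $[rst]$ and $[rts]$ together force $\dist(s,t)=0$; combined with the hypothesis that $\{r,s,t\}$ is degenerate, \emph{exactly} one of the three holds. Call $x$ a \emph{midpoint} if $[yxz]$ for some $y,z\in V$. (iii) If $a,c$ are chosen with $\dist(a,c)$ maximum, then every $d\notin\{a,c\}$ satisfies $[adc]$: indeed $[acd]$ would give $\dist(a,d)=\dist(a,c)+\dist(c,d)>\dist(a,c)$ and $[cad]$ would give $\dist(c,d)>\dist(a,c)$, each contradicting the choice of $a,c$; in particular $d$ lies strictly between $a$ and $c$.

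The heart of the proof is to show that \emph{$a$ is not a midpoint}. Suppose instead that $[xay]$. By (iii) we have $[axc]$ and $[ayc]$, and feeding these together with $[xay]$ into fact (i) excludes $[xyc]$ (which would yield $[xac]$, impossible because $[axc]$ forces $\dist(x,c)<\dist(a,c)$) and, by the symmetric argument, excludes $[yxc]$; hence $[xcy]$. Writing $p=\dist(a,x)$ and $q=\dist(a,y)$, this pins down the six distances among $a,x,c,y$: one gets $\dist(a,c)=\dist(x,y)=p+q$, $\dist(x,c)=q$ and $\dist(y,c)=p$, the pattern of a rectangle in the $\ell_1$-plane with $\{a,c\}$ and $\{x,y\}$ its two diagonals. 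Here $\abs{V}\ge5$ enters: take a fifth point $g$; by (iii) it too lies strictly between $a$ and $c$, and a short case analysis --- comparing $\dist(a,g)$ with $p$ and with $q$ and using the degeneracy of the triangles $\{a,g,x\}$, $\{a,g,y\}$, $\{c,g,x\}$, $\{c,g,y\}$, $\{x,g,y\}$ --- forces one of them to satisfy all three triangle inequalities strictly, a contradiction. I expect this case analysis (morally: ``no fifth point fits inside or on the boundary of such a rectangle without wrecking a triangle'') to be the main obstacle; it is exactly the point at which four points would not suffice, as the four vertices of an $\ell_1$-square show.

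The rest is bookkeeping. Since $a$ is not a midpoint, $\phi(x)=\dist(a,x)$ is injective: $\phi(x)=\phi(y)$ with $x\ne y$ would make $\{a,x,y\}$ non-degenerate unless $[xay]$, which is now impossible. So $\phi$ induces a linear order $\preceq$ on $V$, with $a$ least and, by (iii), $c$ greatest. One uniform claim finishes: whenever $u\ne a$ and $u\prec v$, then $[auv]$. Indeed, if $v=c$ this is (iii); otherwise $u,v\notin\{a,c\}$, and in the degenerate triangle $\{a,u,v\}$ the middle cannot be $a$ (not a midpoint) nor $v$ (that would force $\phi(v)<\phi(u)$), so it is $u$, i.e.\ $[auv]$. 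Finally, let $r\prec s\prec t$. If $r=a$, the claim with $u=s$, $v=t$ is exactly $[rst]$. If $r\ne a$, the claim gives $[ars]$ and $[ast]$, and fact (i) converts these into $[rst]$.
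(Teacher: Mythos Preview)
The paper does not prove Theorem~\ref{riri}: it is quoted from Richmond and Richmond~\cite{RiRi97} and used as a black box in the proofs of Lemma~\ref{uf} and Lemma~\ref{kal}. So there is no ``paper's own proof'' to compare against, and what follows is an assessment of your argument on its own terms.

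Your strategy is the right one and the bookkeeping at the end is clean: once you know that a farthest point $a$ is never a midpoint, ordering by $\dist(a,\cdot)$ works exactly as you describe, and your use of Menger's transitivity (your fact~(i)) to pass from $[ars]$ and $[ast]$ to $[rst]$ is correct. The reduction of ``$a$ is a midpoint'' to the four-point ``$\ell_1$-rectangle'' configuration $\{a,x,c,y\}$ with $\dist(a,c)=\dist(x,y)=p+q$, $\dist(x,c)=q$, $\dist(y,c)=p$ is also correct.

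The one genuine gap is exactly where you flag it: you do not carry out the fifth-point case analysis, you only ``expect'' it to work. It does, but it is not a one-liner; you should actually write it down. A clean way is this. With $s=\dist(a,g)$ you have $[agc]$, hence $\dist(g,c)=p+q-s$, and since $[xyg]$ and $[yxg]$ would force $\dist(g,x)>p+q$ or $\dist(g,y)>p+q$, the degenerate triangle $\{x,g,y\}$ must satisfy $[xgy]$, i.e.\ $\dist(g,x)+\dist(g,y)=p+q$. Now analyse the degenerate triangles $\{c,g,x\}$ and $\{c,g,y\}$ (using $\dist(c,x)=q$, $\dist(c,y)=p$): comparing $s$ with $p$ and with $q$ and discarding the options that would make a distance exceed $p+q$ pins down $\dist(g,x)$ and $\dist(g,y)$ in each range, and in every case $\dist(g,x)+\dist(g,y)\neq p+q$. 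This is four short subcases (say $s<p$, $s=p$, $p<s<q$, $s\ge q$, assuming $p\le q$), each ending in a contradiction; none is hard, but none can be skipped.
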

Here, the lower bound on $\abs{V}$ cannot be reduced: consider $V=\{a,b,c,d\}$ and
$\dist(a,b)\!=\!dist(b,c)\!=\!\dist(c,d)\!=\!\dist(d,a)\!=\!1$, $\dist(a,c)\!=\!\dist(b,d)\!=\!2$.\\

The purpose of this note is to prove that the hypothesis of Theorem~\ref{riri} can be relaxed as soon as $\abs{V}=6$ and that it can be relaxed further and further as $\abs{V}$ gets larger and larger. To state these results, let us call a set $\ce$ of three-point subsets of a set $V$ an {\em anchor in $V$\/} if, for every metric space $(V,\dist)$, the assumption that all triangles in $\ce$ are degenerate implies a linear order $\preceq$ on $V$ such that $r\prec s\prec t \;\Rightarrow [rst]$. In this terminology, Theorem~\ref{riri} asserts that whenever $\abs{V}=n\ge 5$, the set of all $\binom{n}{3}$ three-point subsets of $V$ is an anchor in $V$.

\begin{theorem}\label{weak}
If $\abs{V}=n$, then every set of $\binom{n}{3}-n+5$ three-point subsets of $V$ is an anchor in $V$.
\end{theorem}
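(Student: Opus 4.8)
The plan is to induct on $n=\abs V$. For $n\le 4$ the statement is vacuous, since then $\binom n3-n+5>\binom n3$ and no such $\ce$ can exist; for $n=5$ we have $\binom n3-n+5=\binom n3$, so $\ce$ must consist of all triangles and the conclusion is precisely Theorem~\ref{riri}. So assume $n\ge 6$, suppose the statement holds for $n-1$, and let $(V,\dist)$ be a metric space in which every triangle of $\ce$ is degenerate. Let $m$ be the number of triangles of $V$ not in $\ce$; by hypothesis $m\le n-5$. If $m=0$ then all triangles of $V$ are degenerate and Theorem~\ref{riri} applies, so assume $m\ge 1$ and fix a triangle $T_0\notin\ce$.

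The idea is to delete two distinct points $v,v'\in T_0$ one at a time, treat the resulting $(n-1)$-point spaces by induction, and glue the two linear orders. When $v$ is deleted, every triangle of $V\setminus\{v\}$ outside $\ce$ is a missing triangle of $V$ other than $T_0$, so there are at most $m-1\le(n-1)-5$ of them; hence $\ce\cap\binom{V\setminus\{v\}}{3}$ has at least $\binom{n-1}3-(n-1)+5$ members and, by induction, is an anchor in $V\setminus\{v\}$. Applied to $V\setminus\{v\}$ it yields a linear order with $r\prec s\prec t\Rightarrow[rst]$, equivalently (sending each point to its distance from the least one) an isometric embedding $g\colon V\setminus\{v\}\to\mathbb{R}$; symmetrically, deleting $v'$ gives an isometric embedding $h\colon V\setminus\{v'\}\to\mathbb{R}$. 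On $W:=V\setminus\{v,v'\}$, a set of $n-2\ge 4$ points, the orders induced by $g$ and by $h$ realize the same betweenness relation among triples of $W$ (betweenness depends only on the three mutual distances), and such an order on at least three points is determined up to reversal by that relation; so after composing $h$ with an isometry of $\mathbb{R}$ we may assume $g$ and $h$ agree on $W$. Then $g$ and $h$ combine into a single map $\phi\colon V\to\mathbb{R}$ that is an isometry on every pair of points of $V$ except possibly $\{v,v'\}$, and $\phi$ is injective on $W$.

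It remains to show that $\phi$ is genuinely an isometry, i.e.\ $\dist(v,v')=\abs{\phi(v)-\phi(v')}$; granting this, ordering $V$ by the value of $\phi$ gives a linear order with $r\prec s\prec t\Rightarrow[rst]$, finishing the induction. This is where the counting hypothesis is spent, and the bound $n-5$ is exactly what is needed: it both lets us afford the deletion above and keeps enough degenerate triangles through $\{v,v'\}$. Indeed, of the $n-2$ triangles $\{v,v',w\}$ with $w\in W$, at most $m\le n-5$ are missing, so at least $(n-2)-m\ge 3$ are degenerate. Write $p=\phi(v)$, $p'=\phi(v')$, $q_w=\phi(w)$; since $\phi$ is an isometry on $\{v,w\}$ and on $\{v',w\}$, degeneracy of $\{v,v',w\}$ says one of $\dist(v,v')$, $\abs{p-q_w}$, $\abs{p'-q_w}$ equals the sum of the other two, i.e.\ $\dist(v,v')$ equals $\abs{p-q_w}+\abs{p'-q_w}$ or $\bigl|\,\abs{p-q_w}-\abs{p'-q_w}\,\bigr|$. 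An elementary computation — splitting according to whether $q$ lies between $p$ and $p'$ and using $\dist(v,v')\ge 0$ — shows that if $\dist(v,v')\ne\abs{p-p'}$, then each of the equations $\dist(v,v')=\abs{p-q}+\abs{p'-q}$ and $\dist(v,v')=\bigl|\,\abs{p-q}-\abs{p'-q}\,\bigr|$ has at most two solutions $q\in\mathbb{R}$; since $\phi$ is injective on $W$, at most two $w$ would then make $\{v,v',w\}$ degenerate, contradicting that at least three do. Hence $\dist(v,v')=\abs{p-p'}$. The main obstacle is exactly this last step — verifying that the two equations have at most two solutions apiece once $\dist(v,v')\ne\abs{p-p'}$; everything preceding it is bookkeeping with the inductive estimate.
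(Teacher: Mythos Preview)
Your proof is correct and takes a genuinely different route from the paper's. The paper proceeds via weak saturation: it first proves, by a somewhat intricate case analysis (Lemma~\ref{uf}), that in any six-point metric space with $19$ degenerate triangles the twentieth is degenerate too; it then shows that every $3$-uniform hypergraph with at least $\binom{n}{3}-n+5$ hyperedges is weakly $K^3_6$-saturated (Lemma~\ref{new}), so that iterating the six-point lemma forces all triangles to be degenerate and Theorem~\ref{riri} finishes. You instead induct directly on $n$: delete two vertices $v,v'$ of a missing triangle, apply the inductive hypothesis to embed each $(n-1)$-point subspace isometrically into $\mathbb{R}$, glue on the $(n-2)$-point overlap, and then use that at least three of the triangles $\{v,v',w\}$ lie in $\ce$ to pin down $\dist(v,v')$. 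Your argument is more elementary --- it bypasses the six-point case analysis entirely --- while the paper's framework is more modular and is reused verbatim for Theorem~\ref{strong}, where the same saturation machinery applies to a much smaller $\ce$ that your deletion argument would not handle. One small wording issue: from ``each of the two equations has at most two solutions'' you jump to ``at most two $w$,'' but two equations could in principle contribute four values; the missing remark is that when $\dist(v,v')\ne\abs{p-p'}$ at most one of the two equations can have any solution at all, since $\abs{p-q}+\abs{p'-q}\ge\abs{p-p'}\ge\bigl|\,\abs{p-q}-\abs{p'-q}\,\bigr|$ for every $q$.
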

The $\binom{n}{3}-n+5$ in Theorem~\ref{weak} cannot be replaced by $\binom{n}{3}-n+4$. To see this, consider the graph with vertices 
$1,2,\ldots n$ and edges $\{1,3\}$, $\{1,4\}$, $\{2,3\}$, $\{2,4\}$ and $\{i,i+1\}$ with $i=4,5,\ldots n-1$. In the metric space induced by this graph (in the usual way, where edges have unit lengths), the only nondegenerate triangles are $\{1,2,i\}$ with $i=5,6,\ldots n$.
\begin{theorem}\label{strong}
If $\abs{V}=n\ge 5$, then there is an anchor in $V$ consisting of $3\binom{n-2}{2}+1$ three-point subsets of $V$. 
\end{theorem}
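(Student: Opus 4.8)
My plan is to produce an explicit anchor of the prescribed size and then verify the anchor property by reducing to Theorem~\ref{riri}. Fix three distinct points $a,b,c\in V$ and let $\ce$ be the family of all three-point subsets of $V$ that meet $\{a,b,c\}$. Classifying such a subset by whether it contains exactly one, exactly two, or all three of $a,b,c$ gives $\abs{\ce}=3\binom{n-3}{2}+3(n-3)+1$, and since $\binom{n-3}{2}+(n-3)=\binom{n-2}{2}$ by Pascal's identity, this equals $3\binom{n-2}{2}+1$. So the cardinality is right, and everything reduces to showing that $\ce$ is an anchor.

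Before doing that I would record one elementary observation: a linear order $x_1\prec x_2\prec\cdots\prec x_m$ on a metric space satisfies $r\prec s\prec t\Rightarrow[rst]$ precisely when the space embeds isometrically into $\mathbb{R}$. Indeed, given such an order the map $x_i\mapsto\sum_{j<i}\dist(x_j,x_{j+1})$ is an isometric embedding (prove $\dist(x_i,x_k)=\sum_{j=i}^{k-1}\dist(x_j,x_{j+1})$ for $i\le k$ by induction on $k-i$, using $[x_ix_{i+1}x_k]$), and conversely an isometric embedding into $\mathbb{R}$ yields such an order, namely the one by images. In this language Theorem~\ref{riri} reads: every metric space on at least five points all of whose triangles are degenerate embeds isometrically into $\mathbb{R}$.

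Now suppose $(V,\dist)$ is a metric space in which every triangle in $\ce$ is degenerate, and put $W=V\setminus\{a,b,c\}$, so $\abs{W}=n-3\ge2$. For each two-element subset $\{p,q\}$ of $W$ let $S_{p,q}=\{a,b,c,p,q\}$, a set of exactly five points. Every triangle contained in $S_{p,q}$ meets $\{a,b,c\}$ — it cannot be a three-element subset of the two-element set $\{p,q\}$ — so it lies in $\ce$ and is degenerate; hence Theorem~\ref{riri} provides an isometric embedding $\phi_{p,q}\colon S_{p,q}\to\mathbb{R}$. Using a translation and, if necessary, the reflection $t\mapsto-t$, I would normalise so that $\phi_{p,q}(a)=0$ and $\phi_{p,q}(b)>0$ (possible since $a\ne b$). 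Any two normalised embeddings then agree on the overlap of their domains: both send $a$ to $0$, both send $b$ to $\dist(a,b)$ by isometry, and the image of any further common point $x$ is thereby forced, since $\abs{\phi(x)}=\dist(x,a)$ admits only two values and $\abs{\phi(x)-\dist(a,b)}=\dist(x,b)$ excludes one of them. Because $\abs{W}\ge2$ every point of $V$ lies in some $S_{p,q}$, so the maps glue into a single well-defined $\phi\colon V\to\mathbb{R}$; and any two points of $V$ lie together in some $S_{p,q}$ (take that pair if both are in $W$, and otherwise pad with elements of $W$), so $\phi$ is an isometric embedding of $(V,\dist)$ into $\mathbb{R}$. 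By the observation above, $\phi$ yields a linear order on $V$ with the required property, so $\ce$ is an anchor.

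The step I expect to take the most care is the gluing, namely making sure the orders that Theorem~\ref{riri} attaches to the various five-point sets are mutually consistent; this is exactly why I would pass to the more rigid notion of an isometric embedding into $\mathbb{R}$ and pin down each local embedding by fixing the images of $a$ and $b$. Everything else is bookkeeping. Note that the hypothesis $n\ge5$ is used only to guarantee $\abs{W}\ge2$, so that the sets $S_{p,q}$ genuinely have five elements and Theorem~\ref{riri} can be applied to them; for $n=5$ the family $\ce$ is simply the set of all triangles and the statement is Theorem~\ref{riri} itself. Finally, three anchor points are genuinely needed for this approach: with only two ``hub'' points $a,b$ the analogous family would miss the hub-free triangles, and the graph metric of $K_{2,3}$ (two hubs, three leaves, all hub--leaf edges of length one) makes every triangle through a hub degenerate while the leaf triangle is equilateral, so that family is not an anchor.
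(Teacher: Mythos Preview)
Your proof is correct, and it uses the same construction as the paper: fix $\{a,b,c\}\subseteq V$ and let $\ce$ consist of all triples meeting $\{a,b,c\}$. Where you diverge is in verifying that $\ce$ is an anchor. The paper observes that for any missing triple $\{p,q,r\}\subseteq V\setminus\{a,b,c\}$, the six-point set $\{a,b,c,p,q,r\}$ already contains $19$ members of $\ce$, so $\ce$ is weakly $K^3_6$-saturated; it then appeals to Lemma~\ref{kal}, which rests on Lemma~\ref{uf} (the six-point, $19$-triangle lemma that is the paper's main technical step). You instead apply Theorem~\ref{riri} directly to the five-point sets $\{a,b,c,p,q\}$ and glue the resulting line embeddings by pinning down the images of $a$ and $b$; this bypasses Lemma~\ref{uf} altogether and gives a shorter, self-contained proof of Theorem~\ref{strong}. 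The trade-off is that the paper's route develops a single tool (weak $K^3_6$-saturation $\Rightarrow$ anchor) that handles both Theorem~\ref{weak} and Theorem~\ref{strong} uniformly, whereas your gluing argument is tailored to this particular $\ce$ and does not obviously extend to the arbitrary-placement setting of Theorem~\ref{weak}.
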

We do not know whether or not the $3\binom{n-2}{2}+1$ in Theorem~\ref{strong} can be reduced.

\section{Proofs.}

Our arguments involve the following algorithm that, given a set $\ce$ of triangles in $V$, produces a certain sequence $T_1,T_2,\ldots T_m$ of pairwise distinct triangles outside $\ce$: 
\begin{quote}
Set $m=0$.\\ While some six-point subset $S$ of $V$ contains precisely $19$ members of $\ce \cup\{T_i:1\le i\le m\}$,\\ increment $m$ by one and then let $T_{m}$ be the $20$th three-point subset of $S$.
\end{quote}
In an iteration of this algorithm, more than one $S$ may be available, and so there may be more than one candidate for $T_m$. Nevertheless, candidates that are rejected now remain available in the next iteration and so, when the algorithm terminates, the set $\{T_i:1\le i\le m\}$ is uniquely determined. We let ${\rm cl}\,\ce$ denote its union with $\ce$. The role of this notion is explained by the following lemma.

\begin{lemma}\label{act}
Let $(V,\dist)$ be a metric space and let $\ce$ be a set of triangles in $V$. If all triangles that belong to $\ce$ are degenerate, then all triangles that belong to ${\rm cl}\,\ce$ are degenerate.
\end{lemma}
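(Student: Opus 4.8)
The plan is to prove Lemma~\ref{act} by adding the triangles $T_1,T_2,\dots$ to the hypothesis one at a time, with essentially all of the content packed into a single statement about six-point spaces.

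\medskip
\noindent\emph{Local claim.} If $(S,\dist)$ is a metric space with $\abs{S}=6$ in which nineteen of its twenty triangles are degenerate, then so is the twentieth.
\medskip

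Granting this, Lemma~\ref{act} follows by induction on $i$: when the algorithm produces $T_i$, it is the $20$th three-point subset of some six-point set $S$ all of whose other nineteen three-point subsets already lie in $\ce\cup\{T_j:1\le j<i\}$ and hence are degenerate --- those in $\ce$ by hypothesis, those among $T_1,\dots,T_{i-1}$ by the inductive hypothesis --- so the Local claim, applied in the metric subspace $S$, makes $T_i$ degenerate as well; since ${\rm cl}\,\ce$ is the union of $\ce$ with the $T_i$'s, every triangle in it is degenerate.

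For the Local claim I would lean on two routine facts. First, a metric space on at least five points in which every triangle is degenerate is isometric to a subset of $\mathbb R$: Theorem~\ref{riri} supplies a linear order $\preceq$ with $r\prec s\prec t\Rightarrow[rst]$, and then sending each point to its distance from the $\preceq$-least point $p$ is such an isometry, because $x\prec y$ gives $[pxy]$ and hence $\dist(p,y)-\dist(p,x)=\dist(x,y)$. Second, two isometries of a metric space with at least two points into $\mathbb R$ differ by an isometry of $\mathbb R$, so once we insist that a fixed point map to $0$ and a second fixed point map to a positive number, the isometry is unique. Now write $S=\{p_1,\dots,p_6\}$ and let $\tau=\{p_4,p_5,p_6\}$ be the one triangle that might fail to be degenerate. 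The five-point subsets $A=S\setminus\{p_6\}$, $B=S\setminus\{p_5\}$, $C=S\setminus\{p_4\}$ each contain all of their triangles among the nineteen degenerate ones, so each is isometric to a subset of $\mathbb R$ by the first fact; normalize these isometries so that $p_1\mapsto 0$ and $p_2\mapsto\dist(p_1,p_2)$ and call them $f,g,h$. By the second fact, $f,g,h$ agree on the pairwise intersections $A\cap B=\{p_1,p_2,p_3,p_4\}$, $A\cap C=\{p_1,p_2,p_3,p_5\}$, $B\cap C=\{p_1,p_2,p_3,p_6\}$, so they combine into one map $F\colon S\to\mathbb R$ restricting to $f$ on $A$, to $g$ on $B$, and to $h$ on $C$. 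Since every two-point subset of $S$ lies inside one of $A,B,C$, the map $F$ preserves all distances; hence $S$ embeds isometrically into $\mathbb R$, and in particular $\tau$, being carried to three collinear reals, is degenerate.

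The routine parts here are the two facts about betweenness and the bookkeeping that each pair from $S$ is contained in $A$, $B$, or $C$. The step I expect to be the crux is the gluing, and the point that makes it work is that the pieces $A,B,C$ have five points, so that Theorem~\ref{riri} genuinely flattens each of them onto a line; a parallel argument using four-point subspaces would break down on the ``square'' metric on four points, which is precisely why the algorithm operates with six-point sets and the threshold nineteen rather than with five-point sets and the threshold nine.
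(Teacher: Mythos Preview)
Your reduction of Lemma~\ref{act} to the six-point local claim by induction along the sequence $T_1,T_2,\dots$ is exactly what the paper does; the paper isolates that six-point statement as its Lemma~\ref{uf} and then cites it in a one-line induction.

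Where you diverge is in the proof of the local claim itself. The paper, like you, applies Theorem~\ref{riri} to each five-point set $S\setminus\{\lambda\}$ for $\lambda$ in the suspect triangle, but it then works directly with the three resulting linear orders $\preceq_\lambda$: after aligning them on the common three points $u,v,w$, it runs a case analysis on the relative positions of the triangle's vertices and repeatedly invokes Menger's implication $[\alpha\beta\gamma]\wedge[\alpha\gamma\delta]\Rightarrow[\alpha\beta\delta]\wedge[\beta\gamma\delta]$ to force $[xyz]$. Your argument instead upgrades each linear order to an isometric embedding into $\mathbb{R}$, normalizes at the two anchor points $p_1,p_2$, and glues. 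The gluing is legitimate: the pairwise overlaps $A\cap B$, $A\cap C$, $B\cap C$ each contain $p_1$ and $p_2$, and a point of $\mathbb{R}$ is determined by its distances to two fixed distinct points, so the normalized embeddings agree on overlaps; and every pair in $S$ misses at least one of $p_4,p_5,p_6$, hence lies in one of $A,B,C$. This gives a genuine isometry $S\hookrightarrow\mathbb{R}$ with no case analysis and no appeal to Menger's identity. Your route is shorter and more conceptual; the paper's route, by contrast, stays entirely within the betweenness relation and actually pins down which of the three degeneracies $[xyz],[yzx],[zxy]$ holds.
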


Most of the work involved in proving Lemma~\ref{act} is subsumed in its following special case:
\begin{lemma}\label{uf}
Let $(V,\dist)$ be a metric space such that $\abs{V}= 6$. If $19$ triangles in $V$ are degenerate, then all $20$ triangles in $V$ are degenerate.
\end{lemma}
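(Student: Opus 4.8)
The plan is to argue by contradiction: suppose $V=\{a,b,c,d,e,f\}$ carries a metric in which exactly one triangle, say $\{d,e,f\}$, is nondegenerate, while all other $19$ triangles are degenerate. The point of departure is Theorem~\ref{riri}: since $\abs V\ge 5$, if \emph{all} triangles on $V$ were degenerate we would get a global linear order; the idea is to show that having one nondegenerate triangle in a $6$-point space is already impossible, essentially because the four ``small'' subsets on which Theorem~\ref{riri} (or direct reasoning about $5$-point degenerate spaces) applies leave no room for $\{d,e,f\}$ to be nondegenerate.

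First I would restrict attention to the three $5$-point subsets obtained by deleting one of $d,e,f$: each of these misses the triangle $\{d,e,f\}$, hence consists entirely of degenerate triangles, so by Theorem~\ref{riri} each carries a linear order witnessing betweenness for all its triples. In particular $V\setminus\{d\}$, $V\setminus\{e\}$, $V\setminus\{f\}$ are each isometric to a set of reals. The key step is to glue these three linear pictures along their large common overlaps (any two of them share the $4$-point set $\{a,b,c\}$ together with one of $d,e,f$) to deduce that $a,b,c$ sit in a common order in all of them and to locate $d$, $e$, and $f$ relative to $a,b,c$; then I would track the three pairwise distances $\dist(d,e),\dist(e,f),\dist(d,f)$ through these embeddings and show they are forced to satisfy one of the betweenness equations among $d,e,f$, contradicting nondegeneracy of $\{d,e,f\}$. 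Concretely, knowing all distances among $\{a,b,c,d\}$ and among $\{a,b,c,e\}$ and among $\{a,b,c,f\}$ as configurations on the line, and using $\abs{\{a,b,c\}}=3$ to pin down the common scale and orientation, should determine $d,e,f$ up to reflection through the $\{a,b,c\}$ axis, and in each combination of reflections one of $[def],[edf],[dfe]$ holds.

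The main obstacle I anticipate is the gluing: the three linear orders are only defined up to reversal, and a priori the embeddings of $\{a,b,c\}$ coming from different $5$-point subsets need not be literally the same, only order-isomorphic; one must check that the three (or more) points among $a,b,c$ genuinely fix a common coordinate system so that $d$, $e$, $f$ can be compared. A secondary subtlety is the case analysis forced by reflections and by the possible coincidences of the positions of $d,e,f$ on the line — but with all pairwise distances in hand these reduce to a short finite check. If a cleaner route is wanted, one can instead invoke the anchor language directly: show that the $19$ triangles always contain, for two of the three ``apex deletions,'' enough structure that the induced orders are forced to agree, which is the same computation packaged differently.
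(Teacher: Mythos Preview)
Your proposal is correct and shares its opening move with the paper's proof: both delete each vertex of the suspect triangle and apply Theorem~\ref{riri} to the three resulting five-point subspaces, then align these on the common three-point set $V\setminus T$. From there the paper stays combinatorial: it works with the three linear orders $\preceq_\lambda$ ($\lambda\in T$), normalises them so that the complement $\{u,v,w\}$ of $T$ is ordered the same way in each, labels $T=\{x,y,z\}$ so that $x\prec_z y$ and $y\prec_x z$, and then finishes with Menger's implication $[\alpha\beta\gamma]\wedge[\alpha\gamma\delta]\Rightarrow[\alpha\beta\delta]\wedge[\beta\gamma\delta]$ through a three-case analysis on where $x$ and $z$ sit relative to $u,v,w$. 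Your route is more geometric: pass to actual isometric embeddings into $\mathbf{R}$, anchor them on a common copy of $\{a,b,c\}$, read off coordinates $x_d,x_e,x_f$, and conclude that the three pairwise distances among $d,e,f$ are $|x_d-x_e|,|x_e-x_f|,|x_d-x_f|$, which forces degeneracy immediately.

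Your route is in fact cleaner than you fear. The ``reflection'' obstacle you flag does not arise: three distinct points of $\mathbf{R}$ admit no nontrivial self-isometry, so once $\iota:\{a,b,c\}\to\mathbf{R}$ is fixed, each of $x_d,x_e,x_f$ is \emph{uniquely} determined by its distances to $a,b,c$ (two of those distances already suffice). Hence $x_d$ is the same whether you compute it inside $V\setminus\{e\}$ or inside $V\setminus\{f\}$, and the anticipated case analysis over ``combinations of reflections'' collapses to nothing. What the paper's combinatorial argument buys is that it never leaves the betweenness formalism, so it transfers verbatim to the pseudometric-betweenness setting mentioned in the concluding remarks; your coordinate argument is shorter but leans on the real-line embedding.
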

\begin{proof}
By assumption, there is a triangle $T$ in $V$ such that the $19$ triangles distinct from $T$ are degenerate. In particular, for each of the three elements $\lambda$ of $T$, all triangles in $V-\{\lambda\}$ are degenerate and so, 
by Theorem~\ref{riri}, there is a linear order $\preceq_\lambda$ on $V-\{\lambda\}$ such that 
\begin{equation}\label{order}
r\prec_{\lambda}s\prec_{\lambda}t \;\Rightarrow [rst].
\end{equation}
Having chosen a $\mu$ in $T$, let us label the elements of $V-T$ as $u,v,w$ in such a way that 
$u\prec_\mu v\prec_\mu w$, and so $[uvw]$. Now for each of the remaining two elements $\nu$ of $T$, property~\eqref{order} implies $u\prec_\nu v\prec_\nu w$ or $w\prec_\nu v\prec_\nu u$; reversing the orders if necessary, we may assume that $u\prec_\nu v\prec_\nu w$, and so 
\begin{equation}\label{anchor}
u\prec_\lambda v\prec_\lambda w\;\text{ for all three $\lambda$ in $T$.}
\end{equation}
Next, let us label the elements of $T$ temporarily as $a,b,c$ in such a way that $a\prec_cb$ and then permanently as $x,y,z$: If $b\prec_ac$, then $x=a,y=b,z=c$; else either $x=a,y=c,z=b$ (in case $a\prec_bc$) or $x=c,y=a,z=b$ (in case $c\prec_ba$). Now we have
\begin{equation}\label{xyz}
x\prec_zy \;\text{ and }\; y\prec_xz.
\end{equation}
For future reference, let us note that
\begin{equation}\label{ident}
\text{the restrictions of $\preceq_x$ and $\preceq_z$ on $\{u,v,w,y\}$ are identical.}
\end{equation}
(Analogous statements apply to $x,y$ in place of $x,z$ and to $y,z$ in place of $x,z$, but we will not need these variations.) To see this, observe that, by virtue of \eqref{anchor} and \eqref{order}, our metric space determines the rank of $y$ in the restriction of $\preceq_x$ on $\{u,v,w,y\}$ in the same way as it determines the rank of $y$ in the restriction of $\preceq_z$ on $\{u,v,w,y\}$. 

The remainder of the proof relies on the fact that
\begin{equation}\label{men}
[\alpha\beta\gamma]\text{ and }[\alpha\gamma\delta]\;\;\Rightarrow\;\;[\alpha\beta\delta]\text{ and }[\beta\gamma\delta],
\end{equation}
which has been pointed out by Menger~\cite{M28}. (By the way, extensions of~\eqref{men} are discussed in~\cite[Section 6]{Chv04}.)

Finally, we are ready to prove that the triangle $\{x,y,z\}$ is also degenerate. Actually, we are going to prove $[xyz]$. For this purpose, let us distinguish between three cases.\\

\mbox{\hspace{1cm}}\textsc{Case 1:} $u\prec_zx$. By \eqref{xyz}, we have $u\prec_zx\prec_zy$, and so $[uxy]$; by \eqref{ident} and \eqref{xyz}, we have $u\prec_xy\prec_xz $, and so $[uyz]$. Now we have $[xyz]$ by~\eqref{men}.\\

\mbox{\hspace{1cm}}\textsc{Case 2:} $z\prec_xw$. By \eqref{xyz}, we have $y\prec_xz\prec_xw$, and so $[yzw]$; by \eqref{xyz} and \eqref{ident}, we have $x\prec_zy\prec_zw$, and so $[xyw]$. Now we have $[xyz]$ by~\eqref{men}.\\

\mbox{\hspace{1cm}}\textsc{Case 3:} $x\prec_zu\prec_zv\prec_zw$ and $u\prec_xv\prec_xw\prec_xz$. In particular, 
\[
[xuv],[xvw],[vwz],[uvz].
\]
Here we have neither $[vxz]$ (else $[uvz]$ and \eqref{men} would imply $[uvx]$, contradicting $[xuv]$)
nor $[xzv]$ (else $[xvw]$ and \eqref{men} would imply $[zvw]$, contradicting $[vwz]$); since 
$\{x,v,z\}$ is degenerate, we must have 
\[
[xvz].
\]
If $v\prec_xy$, then $[vyz]$ by \eqref{xyz}; else $y\prec_zv$ by \eqref{ident}, and so $[xyv]$ by \eqref{xyz}; in either case, this implies $[xyz]$ by $[xvz]$ and \eqref{men}. \end{proof}
\begin{proofof}{Proof of Lemma~\ref{act}}
By definition, members of ${\rm cl}\,\ce-\ce$ can be enumerated as $T_1,T_2,\ldots T_m$ in such a way that, for each $i=1,2,\ldots ,m$, some six-point subset $S_i$ of $V$ contains precisely $19$ members of $\ce \cup\{T_1, T_2, \ldots T_{i-1}\}$ and $T_i$. 
Induction on $i\;(=1,2,\ldots m)$ shows that all triangles belonging to $\ce \cup\{T_1, T_2, \ldots T_{i}\}$ are degenerate; the induction step relies on Lemma~\ref{uf}. \end{proofof}

The remaining proofs fit the following framework. 
Berge~\cite{Ber73} defined a {\em $3$-uniform hypergraph\/} as an ordered pair $(V,\ce)$ such that $V$ is a set and $\ce$ is a set of three-point subsets of $V$; elements of $V$ are called {\em vertices\/} and elements of $\ce$ are called {\em hyperedges.\/} Bollob\' as~\cite{Bol65} coined the term {\em $m$-saturated\/} to designate certain graphs and hypegraphs and later~\cite{Bol68} he introduced a related notion of {\em weakly $m$-saturated\/} graphs (which applies to hypergraphs, too). In the established terminology with notation far from unified, a $3$-uniform hypegraph $(V,\ce)$ is said to be {\em weakly $K^6_3$-saturated\/}~\cite[p.~97]{EFT91} or {\em weakly $K^3_6$-saturated\/}~\cite[p.~484]{Pik99} or {\em weakly $K^{(3)}_6$-saturated\/}~\cite{GHV18} if and only if ${\rm cl}\,\ce$ consists of all three-point subsets of $V$. We will adopt the notation of~\cite{Pik99}. The following lemma is reminiscent of the theorem asserting that all weakly $(d+2)$-saturated graphs are rigid~\cite[Theorem 1]{Kal84+}, where ``rigid'' has a geometric meaning that pertains to embedding these graphs into ${\bf R}^d$.

\begin{lemma}\label{kal}
All weakly $K^3_6$-saturated $3$-uniform hypergraphs are anchors.
\end{lemma}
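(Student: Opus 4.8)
The plan is to unpack the definition of ``weakly $K^3_6$-saturated'' and feed it straight into Lemma~\ref{act}. Suppose $(V,\ce)$ is weakly $K^3_6$-saturated, so by definition ${\rm cl}\,\ce$ is the set of \emph{all} three-point subsets of $V$. Let $(V,\dist)$ be any metric space in which every triangle of $\ce$ is degenerate. By Lemma~\ref{act}, every triangle in ${\rm cl}\,\ce$ is degenerate; since ${\rm cl}\,\ce$ is all of $\binom{V}{3}$, this means \emph{every} triangle in $V$ is degenerate.

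From here I would invoke Theorem~\ref{riri} (the Richmond--Richmond theorem) to conclude that there is a linear order $\preceq$ on $V$ with $r\prec s\prec t\Rightarrow[rst]$, which is exactly what it means for $\ce$ to be an anchor in $V$. The only wrinkle is that Theorem~\ref{riri} requires $\abs{V}\ge 5$. So the plan is to dispatch the small cases $\abs{V}\le 4$ first: if $\abs{V}\le 2$ there are no triangles and any linear order works vacuously, and if $\abs{V}=3$ or $4$ the hypothesis that the \emph{single} triangle (resp.\ all four triangles) be degenerate is easily seen to force a suitable linear order by hand — indeed, when $\abs{V}=3$ the hypothesis ``the one triangle is degenerate'' already means $[rst]$ for some labeling, and a three-element ordered set trivially admits the required order; when $\abs{V}=4$ one checks directly (it is a short case analysis, of the same flavor as the one inside the proof of Lemma~\ref{uf}) that four degenerate triangles yield the order. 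Alternatively, one simply notes that for $\abs{V}\le 4$ there is no six-point subset, so ${\rm cl}\,\ce=\ce$, and weak $K^3_6$-saturation would force $\ce=\binom{V}{3}$ outright, reducing to these same elementary checks.

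There is no real obstacle here: Lemma~\ref{kal} is essentially a repackaging of Lemma~\ref{act} together with Theorem~\ref{riri}, and the bookkeeping for $\abs{V}\le 4$ is routine. If anything, the ``hard part'' is purely cosmetic — deciding how much detail to spend on the tiny cases versus simply remarking that they are trivial or that the definition of anchor is vacuous/immediate there. I would therefore keep the proof to a few lines: cite Lemma~\ref{act} to push degeneracy from $\ce$ to all of $\binom{V}{3}$, cite Theorem~\ref{riri} for $\abs{V}\ge 5$, and handle $\abs{V}\le 4$ with a one-sentence remark.
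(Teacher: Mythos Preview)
Your main argument is exactly the paper's: its entire proof reads ``Concatenation of Lemma~\ref{act} and Theorem~\ref{riri}.'' So on the substance you are aligned.

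However, your handling of the small cases contains a genuine error. You assert that for $\abs{V}=4$ ``one checks directly \ldots\ that four degenerate triangles yield the order.'' This is false, and the paper itself supplies the counterexample immediately after Theorem~\ref{riri}: the $4$-cycle metric with $\dist(a,b)=\dist(b,c)=\dist(c,d)=\dist(d,a)=1$ and $\dist(a,c)=\dist(b,d)=2$ has all four triangles degenerate yet admits no linear order $\preceq$ with $r\prec s\prec t\Rightarrow[rst]$. Consequently the complete $3$-uniform hypergraph on four vertices is \emph{not} an anchor, and since (as you note) ${\rm cl}\,\ce=\ce$ when $\abs{V}<6$, this hypergraph is weakly $K^3_6$-saturated. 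So Lemma~\ref{kal}, read literally, fails at $\abs{V}=4$.

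The paper sidesteps this by using Lemma~\ref{kal} only in the proofs of Theorems~\ref{weak} and~\ref{strong}, where $n\ge 5$ is in force (explicitly in Theorem~\ref{strong}, and via the inequality $\binom{n}{3}\ge\binom{n}{3}-n+5$ in Lemma~\ref{new}). The honest fix is either to add the hypothesis $\abs{V}\ge 5$ to the lemma or to note that the small cases are outside its intended scope; what you cannot do is claim the $\abs{V}=4$ case goes through by a routine check.
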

\begin{proof}
Concatenation of Lemma~\ref{act} and Theorem~\ref{riri}.
\end{proof}

\begin{proofof}{Proof of Theorem~\ref{weak}}
Concatenation of the following lemma with Lemma~\ref{kal}.
\end{proofof}

\begin{lemma}\label{new}
Every $3$-uniform hypergraph with $n$ vertices and at least $\binom{n}{3}-n+5$ hyperedges is weakly $K^3_6$-saturated.
\end{lemma}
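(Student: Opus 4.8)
The plan is to prove Lemma~\ref{new} by induction on $n$: first use the closure operation to fill in all triples that avoid one well-chosen vertex, and then fill in the triples through that vertex one at a time. For $n\le 4$ the statement is vacuous, since then $\binom n3-n+5>\binom n3$; for $n=5$ it is immediate, since then $\binom n3-n+5=\binom n3$, so all $\binom 53$ triples are already present and ${\rm cl}\,\ce=\ce$. So assume $n\ge 6$, let $(V,\ce)$ have $n$ vertices and at most $n-5$ triples outside $\ce$, and assume the lemma for $n-1$. Throughout I use the fact, recorded in the paper, that ${\rm cl}\,\ce$ is independent of the choices made by the algorithm; hence it suffices to exhibit \emph{some} sequence of legal steps --- each one completing a six-point subset of $V$ that currently contains exactly $19$ of its $20$ triples --- after which all three-point subsets of $V$ are present.

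If every triple is already present there is nothing to prove. Otherwise fix a missing triple and let $v$ be one of its vertices. \emph{Phase 1.} The hypergraph induced on $V-\{v\}$ has $n-1\ge 5$ vertices and at most $(n-5)-1=(n-1)-5$ missing triples, because the chosen missing triple goes through $v$ and so is not counted; by the induction hypothesis it is weakly $K^3_6$-saturated, so there is a sequence of legal steps, each using a six-point subset of $V-\{v\}$, after which every triple lying inside $V-\{v\}$ is present. Every six-point subset of $V-\{v\}$ is one of $V$ and the ``$19$ of $20$'' count is unchanged, so this sequence is legal in $V$; after performing it, the only triples that may still be missing are triples containing $v$.

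\emph{Phase 2.} Record the remaining missing triples by the \emph{link graph} $L$ on vertex set $V-\{v\}$, where $\{x,y\}$ is an edge precisely when $\{v,x,y\}$ is still missing; then $\abs{E(L)}\le n-5$. As long as $E(L)\ne\emptyset$, I claim a further legal step is available that removes an edge of $L$. Indeed, a graph on $n-1$ vertices with at most $n-5$ edges has at least $(n-1)-(n-5)=4$ connected components. Choose any edge $\{a,b\}$ of $L$ and pick $c,d,e$, one from each of three components of $L$ distinct from the one containing $\{a,b\}$; then the only edge of $L$ among $a,b,c,d,e$ is $\{a,b\}$. Hence inside the six-point set $S=\{v,a,b,c,d,e\}$ every triple missing $v$ is present (all lie in $V-\{v\}$), while among the ten triples through $v$ exactly one --- namely $\{v,a,b\}$ --- is missing; so $S$ has $19$ of its $20$ triples and the step adds $\{v,a,b\}$, reducing $\abs{E(L)}$. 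Iterating until $E(L)=\emptyset$ makes every triple of $V$ present, as required.

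The only step carrying real content is the component count in Phase~2, and it is exactly here that the constant in the lemma is forced: to fill the triple $\{v,a,b\}$ by this method we need, besides the component of $\{a,b\}$, three further components of $L$ to supply $c,d,e$, and $n-1$ vertices with $n-5$ edges is the densest regime in which four components are guaranteed. (The stated extremal configuration, with the $n-4$ triples $\{1,2,i\}$ missing, is consistent with this: there every six-point set containing $\{1,2\}$ is missing at least two triples and every other one is missing none, so the algorithm never starts.) Phase~1 is a routine induction and the base cases are trivial, so I expect no further difficulty.
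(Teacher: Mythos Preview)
Your proof is correct, but it is considerably more elaborate than the paper's. The paper gives a one-shot argument with no induction: given any missing triple $T$, pick for each other missing triple $T'$ one vertex of $T'-T$; the set $W$ of these vertices has size at most $(n-5)-1=n-6$, so $V-W$ has at least $6$ points, contains $T$, and $T$ is its \emph{only} missing triple. Any six points of $V-W$ containing $T$ then witness $T\in{\rm cl}\,\ce$ directly from the original $\ce$, and since $T$ was arbitrary the closure is everything.

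Your two-phase induction reaches the same conclusion, and both Phase~1 (at most $(n-1)-5$ triples miss $V-\{v\}$ once $v$ is chosen inside a missing triple) and Phase~2 (the link graph has $\le n-5$ edges, hence $\ge 4$ components, so one can isolate any link edge inside a five-vertex set) are sound. What your approach buys is an explicit sequential filling order layered by vertex; what it costs is the induction machinery and the graph-component argument, neither of which the direct hitting-set proof needs. Note in particular that the paper's argument already shows every missing triple can be added at the \emph{first} step of the closure, so the order-independence of ${\rm cl}\,\ce$ is invoked more lightly there than in your version.
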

\begin{proof}
Consider a $3$-uniform hypergraph $(V,\ce)$ such that $\abs{\ce}\ge\binom{n}{3}-n+5$, where $n=\abs{V}$. Since 
$\abs{\ce}\le\binom{n}{3}$, we have $n\ge 5$; if $n=5$, then $\ce$ consists of all three-point subsets of $V$; now let us assume that $n\ge 6$. Given any three-point subset $T$ of $V$, we shall show that $T\in{\rm cl}\,\ce$. This is trivial when $T\in\ce$; to prove it when $T\not\in\ce$, 
it suffices to find a six-point subset $S$ of $V$ such that $T$ is the unique three-point subset of $S$ not belonging to
$\ce$.
For this purpose, take one vertex from each $T'-T$ such that $T'$ is a three-point subset of $V$ not belonging to $\ce$ and $T'\ne T$. With $W$ standing for the set of all these vertices, $T$ is the unique three-point subset of $V-W$ not belonging to $\ce$; since $\abs{W}\le \binom{n}{3}-\abs{\ce}-1$, we have $\abs{V-W}\ge n-\binom{n}{3}+\abs{\ce}+1\ge 6$.
\end{proof}
The remark that follows Theorem~\ref{weak} shows that the $\binom{n}{3}-n+5$ in Lemma~\ref{new} cannot be reduced.\\

\begin{proofof}{Proof of Theorem~\ref{strong}}
Concatenation of the following lemma with Lemma~\ref{kal}.
\end{proofof}

\begin{lemma}\label{cons}
For every integer $n$ greater than four there is a weakly $K^3_6$-saturated $3$-uniform hypergraph with $n$ vertices and 
$3\binom{n-2}{2}+1$ hyperedges.\\
\end{lemma}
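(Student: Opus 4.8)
The plan is to construct the hypergraph explicitly and then verify weak $K^3_6$-saturation by exhibiting, for each missing hyperedge, a sequence of six-point sets that fills it in. A natural starting point is the extremal example given just after Theorem~\ref{weak}: take vertices $1,2,\dots,n$, keep as \emph{non}-hyperedges exactly the triples $\{1,2,i\}$ with $5\le i\le n$, and let $\ce$ be everything else. That hypergraph has $\binom n3-(n-4)$ hyperedges, which is far more than $3\binom{n-2}{2}+1$, so it is too dense; the point of Theorem~\ref{strong} is that a much sparser set still anchors. So instead I would work \emph{dually}: design $\ce$ to be small, i.e.\ make the set of non-hyperedges large but highly structured, and show every non-hyperedge still lies in ${\rm cl}\,\ce$.

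Concretely, I would distinguish two of the $n$ vertices, call them $a$ and $b$, set $U=V\setminus\{a,b\}$ with $\abs U=n-2$, and let $\ce$ consist of: all $\binom{n-2}{2}$ triples $\{a\}\cup e$ with $e$ a pair in $U$; all $\binom{n-2}{2}$ triples $\{b\}\cup e$ with $e$ a pair in $U$; and exactly one extra triple, either $\{a,b,c\}$ for a fixed $c\in U$ or a single triple inside $U$. That is $2\binom{n-2}{2}+1$ hyperedges, which does not match $3\binom{n-2}{2}+1$, so the construction needs a third family of $\binom{n-2}{2}$ hyperedges — the most symmetric choice being to also include enough triples inside $U$, or to use three distinguished vertices rather than two. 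The bookkeeping to land exactly on $3\binom{n-2}{2}+1$ is the part I would pin down carefully once the saturation mechanism below is seen to work; the natural guess is three distinguished vertices $a,b,c$, with $\ce$ containing all $\binom{n-3}{2}$ "link" triples at each of $a,b,c$ plus the pairwise-distinguished triples plus one seed, and then adjusting.

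Granting a reasonable such $\ce$, the core argument is a filling-in induction using Lemma~\ref{uf} in the form: \emph{once a six-point set has $19$ of its $20$ triples in the current collection, the $20$th joins it.} So I would first choose a six-point set whose $20$ triples are, except one, already in $\ce$ — this is where the single "seed" hyperedge earns its keep, exactly as in the hypotheses of Lemma~\ref{new} — thereby adding one new triple to ${\rm cl}\,\ce$. Then I would order the remaining missing triples so that each one, together with five cleverly chosen old vertices, forms a six-point set in which it is the unique non-member of $\ce\cup\{T_1,\dots,T_{i-1}\}$; this is literally the condition ``some six-point subset $S$ of $V$ contains precisely $19$ members'' from the algorithm defining ${\rm cl}$. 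Because our non-hyperedges are organized by which of $a,b,c$ they avoid (or by lying inside $U$), the propagation should cascade in a controlled way: first saturate inside $U$, then pull in triples through $a$, then through $b$, then through $c$.

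The main obstacle is verifying that this cascade actually closes \emph{and} that the hyperedge count is exactly $3\binom{n-2}{2}+1$ rather than more or less — these two demands pull against each other, since a sparser $\ce$ makes each "$19$ out of $20$" condition harder to meet. I expect the resolution to hinge on a clean choice of the three families so that, after the seed step, the link triples at one distinguished vertex suffice to saturate all triples inside $U$ (an $(n-2)$-vertex instance of the same phenomenon, handled by an inner induction or by quoting Lemma~\ref{new} in a localized form), after which each distinguished vertex can be "attached" one triple at a time. Checking the base case $n=5$ separately (where $3\binom{3}{2}+1=10=\binom52$, so $\ce$ is everything and saturation is vacuous) gives the induction its footing; the inductive step adds one new vertex and the corresponding batch of $3$ new link-type hyperedges, and one verifies the closure still reaches all of $\binom{n}{3}$.
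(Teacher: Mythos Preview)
Your proposal circles the right construction without landing on it, and the saturation argument you sketch is far more complicated than what is actually needed. The paper's proof is one line: fix a three-element set $S=\{a,b,c\}\subseteq V$ and let $\ce$ consist of all triples that \emph{meet} $S$. The count is then $\binom{n}{3}-\binom{n-3}{3}$, and the identity $\binom{n}{3}-\binom{n-3}{3}=3\binom{n-2}{2}+1$ gives exactly the desired size---no ``adjusting'' or seed bookkeeping required. Your ``natural guess'' of three distinguished vertices with their link triples, the pairwise triples, and a seed is in fact this very hypergraph, but you never commit to it or check the arithmetic.

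The more serious gap is the saturation mechanism. You propose a cascade (``first saturate inside $U$, then pull in triples through $a$, then through $b$, then through $c$''), which is backwards: in the construction above, every triple through $a$, $b$, or $c$ is \emph{already} in $\ce$, and the only missing triples are those lying entirely in $V\setminus S$. For any such missing triple $T$, the six-point set $T\cup S$ has $T$ as its \emph{unique} non-hyperedge (every other three-subset of $T\cup S$ meets $S$), so the closure algorithm adds $T$ in a single step---no ordering, no induction on $n$, no inner appeal to Lemma~\ref{new}. Your plan to use a ``seed'' to start a propagation and then an inductive attachment of new vertices is unnecessary machinery that obscures this one-shot argument.
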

\begin{proof}
Take a three-point subset $S$ of $V$ and let $\ce$ consist of all three-point subsets of $V$ that have a nonempty intersection with $S$. Note that $\binom{n}{3}-\binom{n-3}{3}=3\binom{n-2}{2}+1$. 
\end{proof}
It is a known fact that the $3\binom{n-2}{2}+1$ in Lemma~\ref{cons} cannot be reduced: see, for instance,~\cite{Fra82} or~\cite[Theorem 5.5]{Kal84}.

\section{Concluding remarks}

\begin{enumerate}
\item Theorems~\ref{riri},~\ref{weak}, and~\ref{strong} extend to the context of {\em pseudometric betweenness\/} defined in~\cite[p.~643]{BBC13}, which is more general than the context of metric spaces.

\item The proof of Lemma~\ref{new} extends to show that every $r$-uniform hypergraph with $n$ vertices and at least $\binom{n}{r}-n+k-1$ hyperedges is weakly $K^r_k$-saturated.

\item Following~\cite{BBC13}, let us refer to a $3$-uniform hypergraph $(V,\ce)$ as {\em metric\/} if there is a metric space $M$ such that a triangle in $M$ is degenerate if and only if it belongs to $\ce$. Lemma~\ref{uf} can be reformulated as the statement that the $3$-uniform hypergraph with $6$ vertices and $19$ hyperedges is non-metric. Actually, this hypergraph is minimal non-metric in the sense that the deletion of an arbitrary vertex from it produces a metric hypergraph. To verify this, observe first that the deletion produces a hypergraph with $5$ vertices and $10$  or $9$ hyperedges. The former hypergraph is obviously metric. To see that the latter hypergraph is metric, set $n=5$ in the comment that follows Theorem~\ref{weak}. (By the way, infinitely many minimal non-metric hypergraphs have been constructed in~\cite{CK22}.)
\end{enumerate}

{\bf Acknowledgments.} 
We are grateful to Xiaomin Chen for pointing out that the type of hypergraphs that we are dealing with have been investigated before as weakly saturated graphs and for other insightful comments, and to Guillermo Gamboa for giving us the comment that follows Theorem~\ref{weak}.

\end{document}